\theoremstyle{plain}
\newtheorem{thm}{Theorem}[section]
\newtheorem{lemma}[thm]{Lemma}
\newtheorem{corollary}[thm]{Corollary}
\newtheorem{prop}[thm]{Proposition}
\newtheorem{conj}[thm]{Conjecture}
\theoremstyle{definition}
\newtheorem{point}[thm]{}
\newtheorem{rmk}[thm]{Remark}
\newtheorem{defn}[thm]{Definition}
\newcommand{\Spec}{{\rm Spec \,}}
\renewcommand{\tilde}{\widetilde}
\newcommand{\A}{{\mathbb A}}
\renewcommand{\P}{{\mathbb P}}
\newcommand{\R}{{\mathbb R}}
\newcommand{\Z}{{\mathbb Z}}
\begin{document}
\title{Weak approximation for low degree Del Pezzo surfaces }
\author{Chenyang Xu}

\begin{abstract}
Let $K={\rm Func}(C)$ be the function field of a smooth curve $C$. For every Del Pezzo surface $S/K$
which is appropriately generic, weak approximation for $S$ holds at every place of $K$, i.e., for every closed point $c$ of $C$. This combines earlier work in \cite{xu} with an analysis of weak approximation near boundary points of
the parameter spaces for Del Pezzo surfaces of degrees 1 and 2.
\end{abstract}
\maketitle


\tableofcontents

\vspace*{6pt}
\section{Introduction}
Throughout this paper, the ground field is always of
characteristic 0. For a variety $X$ defined over a number field
$K$, it is a classical question to study the existence and
distribution  of $K$-points on $X$. We say that $X$ satisfies {\it weak
approximation} if for any finite set of places of $K$ and points of
$X$ over the completion of $K$ at these places, there exist
$K$-rational points of $X$ in any neighborhood (in the adelic
topology) of these points. In this note, instead of a number field
we study varieties defined over the function field $K=k(C)$ with $k$
algebraically closed and $C$ a smooth curve. In this context,
rational points correspond to sections of fibrations over a curve,
and proving weak approximation corresponds to finding sections with
prescribed jet data in a finite number of fibers.

 The existence of sections of rationally connected
fibrations was proven by Graber, Harris and Starr in \cite{ghs}.
Under the assumption that there exists a section, Koll\'ar, Miyaoka
and Mori proved the existence of sections through a finite set of
prescribed points in smooth fibers (cf. \cite{kmm}, 2.13 and
\cite{kollarrc}, IV.6.10). The existence of sections with prescribed
finite jet data through smooth fibers, i.e., weak approximation at
places of good reduction, was proven by Hassett and Tschinkel in
\cite{ht06}. In the same paper Hassett and Tschinkel made the
following conjecture.
\begin{conj}[\cite{ht06}]\label{wkconj}
A smooth rationally connected variety $X$ defined over the function field $K$
satisfies weak approximation at places of bad reduction.
\end{conj}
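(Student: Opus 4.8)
Since the conjecture is open in full generality, I would aim at exactly the reach indicated by the abstract: the surface case, and within it the two genuinely hard cases of del Pezzo surfaces of degree one and two (the higher-degree and conic-bundle cases being accessible by more classical means). So fix a del Pezzo surface $S/F$ of degree one or two with $F=K(C)$, and fix a place $c\in C$ at which we want weak approximation. The plan is to choose a model $\pi\colon \sS\to C$ whose generic fibre is $S$ and whose special fibre $\sS_c$ over $c$ is as mild as possible; under the generic-family hypothesis one expects $\sS_c$ to be a log del Pezzo surface, i.e.\ to carry at worst klt singularities, with the prescribed jet data lying in the smooth locus $\sS_c^{\mathrm{sm}}$. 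Weak approximation at $c$ then becomes the concrete task: given an $N$-jet of a section at $c$ supported in $\sS_c^{\mathrm{sm}}$, produce a global section $\sigma\colon C\to \sS$ of $\pi$ realizing that jet. Following the deformation-theoretic framework of Hassett and Tschinkel \cite{ht06}, I would reduce this to an existence-and-freeness statement for vertical rational curves in $\sS_c$.

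The central reduction is to \emph{strong rational connectedness} of $\sS_c^{\mathrm{sm}}$: a smooth quasi-projective variety $U$ is strongly rationally connected if through every point of $U$ there passes a very free rational curve contained in $U$. Granting this for $U=\sS_c^{\mathrm{sm}}$, I would begin from a section $\sigma_0$ of $\pi$ through a general point of $\sS_c^{\mathrm{sm}}$, whose existence is guaranteed by \cite{ghs} together with \cite{kmm}. I would then attach to $\sigma_0$ at $c$ a very free vertical rational curve in $\sS_c^{\mathrm{sm}}$ carrying the prescribed $N$-jet, forming a comb, and smooth the comb while holding the jet fixed. Very-freeness makes the deformations of the comb unobstructed in a way compatible with the prescribed jet, and a general smoothing yields the desired section $\sigma$; since $c$ is arbitrary this gives weak approximation at every place of $C$.

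The input that makes this succeed precisely in the low-degree cases is the result of \cite{xu}: the smooth locus of a log del Pezzo surface is strongly rationally connected. Applied to $U=\sS_c^{\mathrm{sm}}$, this supplies the very free curves through arbitrary smooth points needed in the comb construction, and closes the argument. Thus the structure of the proof is: (i) build a model whose special fibres are log del Pezzo surfaces, (ii) reduce weak approximation to strong rational connectedness of the smooth special fibres via comb-smoothing, and (iii) invoke \cite{xu} to verify that strong rational connectedness.

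The main obstacle is two-fold, and both difficulties cluster at step (i) and step (iii). First one must genuinely secure a model in which $\sS_c$ is a log del Pezzo surface and the prescribed jet can be taken inside $\sS_c^{\mathrm{sm}}$; this is where the ``generic family in the parametrizing space'' hypothesis is essential, since it controls the singularities of the degenerate fibres of a degree one or two del Pezzo surface. Second, and more seriously, strong rational connectedness of $\sS_c^{\mathrm{sm}}$ is itself hard for degrees one and two: the anticanonical system is small, and producing very free curves through points of the smooth locus near the singularities of $\sS_c$ is delicate. It is exactly this step that cannot be done by elementary means and for which \cite{xu} is indispensable.
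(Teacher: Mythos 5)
You are right that (\ref{wkconj}) is open and that the paper only establishes the special case (\ref{main}), but your outline for that special case has a genuine gap at its first step, and the gap is precisely where the paper's actual work lies. Your plan assumes that under the generic-family hypothesis the special fibre $\mathcal{S}_c$ is a log del Pezzo surface sitting inside a model to which the Hassett--Tschinkel criterion (\ref{wa}) applies directly, so that everything reduces to strong rational connectedness of $\mathcal{S}_c^{\mathrm{sm}}$ plus comb-smoothing. That is true only for places $c\in C\cap A^0$, where the total space is smooth and the fibre has Du Val singularities; this is exactly the easy case the paper dispatches in Corollary 2.6. At the places $c\in C\cap H_1$ or $C\cap H_2$ the fibre passes through a singular point of $\P(1,1,2,3)$, the total space $\mathcal{S}$ itself acquires a quotient singularity of type $\frac{1}{2}(1,1,1)$ or $\frac{1}{3}(1,1,2)$, and the fibre has a non-canonical quotient singularity ($\frac{1}{4}(1,1)$ or $\frac{1}{9}(1,2)$). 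An $F_c$-point of $S$ may well reduce to that singular point, so approximating only those jets supported in $\mathcal{S}_c^{\mathrm{sm}}$ does not give weak approximation at $c$. One is forced to resolve the total space, $\pi:\mathcal{T}\to\mathcal{S}$, after which the special fibre is reducible with components $E_0,E_1$ (resp.\ $E_0,E_1,E_2$), and the problem becomes one of moving sections between components while prescribing jets --- a situation to which the single-fibre comb argument you describe does not apply.

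The missing ingredients are exactly Proposition (\ref{au}) and the three steps of Section 3.3: one needs, for each exceptional curve $R\subset E_0$, an auxiliary rational curve meeting $R$ transversally in one point and avoiding the other exceptional curves, in order to correct intersection numbers when smoothing tree-like curves that cross from one component of $\mathcal{T}_c$ to another; and one then needs the explicit constructions showing that a section can be moved onto $E_0$, off of $E_0$ onto any $E_i$, and within a fixed component to a prescribed jet. The result of \cite{xu} is indeed used, but only as one input (strong rational connectedness of $E_0$ minus the exceptional curves, to make the auxiliary and connecting curves free); it does not by itself close the argument. So your step (iii) is correctly identified, but your steps (i) and (ii) conflate the good case (total space smooth, fibre log del Pezzo) with the hard case that motivates the theorem, and as written the proof would only establish approximation for jets reducing into $\mathcal{S}_c^{\mathrm{sm}}$, not weak approximation at $c$.
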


When $X$ is a surface, Colliot-Th\'el\`ene and Gille proved that conic
bundles over $\P^1$ and Del Pezzo surfaces of degree at least four satisfy weak approximation at all places. The cases of Del Pezzo surfaces of degree less than four are still open. It is known that cubic surfaces with square-free discriminant satisfy weak approximation even at places of bad reduction (cf. \cite{ht08}). And a similar result is generalized to degree 2 Del Pezzo surfaces by Knecht \cite{kn}. This paper addresses weak approximation for more cases of low degree Del Pezzo surfaces, including cases of degree 1 Del Pezzo surfaces.

\begin{point}{\bf Notations:}
We first explain some terminology. Let $S/K$ be a smooth Del Pezzo surface of degree 1. Then $S$ can be embedded in the weighted projective space $\P_K(1,1,2,3)$ as a degree 6 hypersurface (for the meaning of the notation, see e.g. \cite{kollarrc}, V.1.3). In the case that $K$ is the fraction field of a smooth curve $C/k$, we can take the closure $\mathcal{S}$ of $S$  in $\P_C(1,1,2,3)$, so that $\mathcal{S}$ is a flat family of degree 6 hypersurfaces in $\P(1,1,2,3)$ over $C$. We denote by $\P^N$ the space which parametrizes all degree 6 hypersurfaces in $\P(1,1,2,3)$. Because $\P(1,1,2,3)$ has two singular points $P_I=(0,0,1,0)$ and $P_{II}=(0,0,0,1)$ which are quotient singularities and of types $\frac{1}{2}(1,1,1)$ and $\frac{1}{3}(1,1,2)$, the locus parametrizing the singular surfaces consists of 3 irreducible components. Two of them are the hyperplanes $H_I$ and $H_{II}$ which parametrize hypersurfaces containing $P_I$ and $P_{II}$.  $H_I$ (resp. $H_{II}$) has a dense open set $H_I^0$ (resp. $H_{II}^0$) parametrizing surfaces with only one singularity which is quotient and of type $\frac{1}{4}(1,1)$ (resp. $\frac{1}{9}(1,2)$) (see Section 3 for more details of the computation). The last component is a hypersurface $A$ which is the closure of the discriminant divisor $A^0$. Here $A^0$ parametrizes singular degree 6 hypersurfaces which contain neither $(0,0,1,0)$ nor $(0,0,0,1)$. Then $A^0$ is the image of a birational  morphism from a smooth variety of dimension $N-1$ (see e.g., \cite{vo03}, 2.1.1) and the smooth locus $(A^{0})^{sm}$ parametrizes surfaces with only one ordinary double point.
\end{point}

\begin{defn}\label{general}
Let $C$ be a smooth curve over $k$. We say that $S/K(=k(C))$ admits a model $\mathcal{S}/C$ which is {\it a transversal family of degree 1 Del Pezzo surfaces} over $C$, if the morphism $\Spec(K)\to \P^N$, which gives the point $[S]$ in the parametrizing space, can be completed to a morphism $f:C\to \P^N$ such that:
\begin{enumerate}
 \item $C$ meets $H_I$ (resp. $H_{II}$) transversally in $H_I^0$ (resp. $H_{II}^0$); and
\item $C$ intersects $A$ in $A^0$ and meets each branch of $A^0$ transversally.
\end{enumerate}
We note that because $H_I$ and $H_{II}$ are open sets of $\P^{N-1}$,  they are smooth, and our transversality condition in $(1)$ is the usual one. The condition $(2)$ means that each branch of $A^0$ at a point $c\in C\cap A^0$ is smooth and transverse to $C$. Also see \eqref{duval}. 
\end{defn}

From the discussion above it is obvious that a generic one-parameter family of degree 1 Del Pezzo surfaces in $\P^N$ is a transversal family.

\begin{thm}\label{main} For every smooth
     degree 1 Del Pezzo surface $S$ defined over the function field $K$ of the curve
    $ $C, if there exists a model giving a transversal family of degree 1 Del Pezzo surfaces $\mathcal{S}$ over $C$, then weak approximation holds at each place $c\in C$.
\end{thm}

\begin{rmk}
We consider all degree 2 Del Pezzo surfaces to be embedded in the weighted projective space $\P(1,1,1,2)$. The locus parametrizing singular fibers consists of two components: $H$ for surfaces containing $(0,0,0,1)$ and $A$ the discriminant.  Then we can similarly define $H^0$, $A^0$ and {\it a transversal family} of degree 2 Del Pezzo surfaces over $C$. Applying the approach in this note to this simpler case, we can prove a similar statement as above. The details are left to the reader.
\end{rmk}

Now let us explain our approach. In \cite{ht06} and \cite{ht08}, Hassett and Tschinkel initiated the method of establishing weak approximation by showing the strong rational connectedness of the smooth locus of the special fibers. In \cite{xu}, we showed that the smooth loci of log Del Pezzo surfaces are always strongly rationally connected. However, for a given low degree Del Pezzo surface $S/K$, usually we can not expect the existence of a smooth model such that the fibers are all log Del Pezzo surfaces. We have to resolve the singularities, thus there is more than one component in some special fibers. To deform an initial section to one matching the prescribed jet data,  we have to find some auxiliary curves to correct the intersection numbers. A similar technique was applied in \cite{ht} to study weak approximation for places where the fibers only contain ordinary singularities.

\begin{rmk}
In \cite{co96}, Corti established a theory of {\it good models} for Del Pezzo surfaces. We note that if $\mathcal{S}/C$ is a transversal family in our sense, then it gives a good model over each point $c\in C$. In fact, from the local rigidity (\cite{co96}, Theorem 1.18), we know the model we are dealing with in this note is the `best' model. It is natural to ask whether a similar approach works for other good models as well.
 \end{rmk}

\noindent
{\bf Acknowledgement:} I would like to thank Brendan Hassett, J\'anos Koll\'ar, Jason Starr for their helpful conversations and emails. I am also indebted to the anonymous referee for many comments which have substantially improved the exposition. In particular, Subsection 3.1 is reorganized according to the referee's suggestions.  Part of the work was done during the author's stay in the Institute for Advanced Study, which was supported by the NSF under agreement No. DMS-0635607. The author was partially supported by NSF research grant no: 0969495.

\noindent {\bf Notations and Conventions:} In the following, a letter in script such as $\mathcal{X}$ always means the model. And for a closed point $c$, we will use the Roman letter $X_c$ to mean the fiber of the model $\mathcal{X}$ over $c$.

\section{Weak Approximation and Rational Curves }

 In this section, we will briefly recall the background. For more discussions, see \cite{ht06} and \cite{ht08}.

\subsection{Weak approximation}
\begin{defn} Let $K$ be  a number field or the function field of a curve $C$ defined over an algebraically closed field $k$. Let $B$ a finite set of places of $K$ containing the archimedean places.
Let $X$ be a variety over $K$, $X(K)$ the set of $K$-rational
points. One says that {\it weak
approximation holds for $X$ away from $B$} if $X(K)\subset \prod_{v\not\in B} X(K_v)$ is dense in the natural direct product topology, where the topology of each fiber $X(K_v)$ is induced by the discrete topology on $X_c(k)$.
\end{defn}

From the method we use, it is easy to see that for our setting, to show $X/K$ satisfies weak approximation away from $B$, it suffices to show for each place $c\not\in B$, weak approximation holds there, i.e., we can work one place at a time.

\subsection{Strong rational connectedness}

The following concept is a variant of rational connectedness in the case that the variety is smooth but nonproper.

\begin{defn}[\cite{ht08}, 14]
If $X$ is a smooth variety over an algebraically closed field $k$, then $X$ is called {\it strongly rationally connected}  if for each point $x\in X$ there is a morphism $f:\P^1 \to X$ such that:
\begin{enumerate}
 \item $x\in f(\P^1)$;
\item $f^*(T_X)$ is ample.
\end{enumerate}
\end{defn}

\noindent
The relationship between weak approximation and strong rational connectedness is established by the following theorem due to Hassett and Tschinkel.

\begin{thm}[\cite{ht06}, \cite{ht08}]\label{wa}
Let $X$ be a smooth proper rationally connected variety over
$K=k(C)$, where $C$ is a smooth curve. Let $\pi : \mathcal{X} \to C$ be a proper smooth model of $X$.  Let $\mathcal{X}^{sm}$ be the locus where $\pi$ is smooth  such that
\begin{enumerate}
\item there exists a section $s : C \to \mathcal{X}^{sm}$;
\item  for each $c \in C$ and $x \in X^{sm}_c$ the fiber over $c$, there exists a rational curve $f : \P^1 \to
X^{sm}_c$ containing $x$ and a general point of $X^{sm}_c$.
\end{enumerate}
Then weak approximation holds for each place $c\in C$.
\end{thm}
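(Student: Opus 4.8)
The plan is to recast approximation away from $B$ as the problem of deforming a section to prescribed jets, to reduce that to a single cohomological vanishing for one sufficiently positive section, and to produce such a section through the prescribed points by attaching and smoothing the fiberwise rational curves supplied by hypothesis (2). For the open model $\mathcal{X}^0$, a local point at a place $c_i\in C$ is a formal section $\hat x_i\in\mathcal{X}^0(\hat{\mathcal O}_{C,c_i})$ over the disk at $c_i$, with reduction $x_i:=\hat x_i(c_i)\in\mathcal{X}^0_{c_i}=X^0_{c_i}$. Approximation away from $B$ then means: for every finite set of places $c_1,\dots,c_n\in C$, all orders $N_1,\dots,N_n$, and every such datum $\hat x_i$, there is a global section $s:C\to\mathcal{X}^0$ whose jet at $c_i$ agrees with $\hat x_i$ modulo $\mathfrak m_{c_i}^{N_i+1}$ for all $i$. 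So the theorem reduces to realizing finitely many prescribed finite jets by a single section landing in $\mathcal{X}^0$.

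Deformations of a section of $\pi$ are governed by the vertical tangent bundle, with first order deformations $H^0(C,s^*T_{\mathcal{X}/C})$ and obstructions in $H^1(C,s^*T_{\mathcal{X}/C})$. Put $N=\max_i N_i$, and suppose I already have a section $s$ with $s(c_i)=x_i$ for all $i$ and with
\[
H^1\!\Bigl(C,\ s^*T_{\mathcal{X}/C}\bigl(-(N+1)\textstyle\sum_i c_i\bigr)\Bigr)=0 .
\]
Then I can match each $\hat x_i$ to order $N$, hence to the required order $N_i$, by induction on the order of agreement: upgrading from order $k$ to order $k+1$ at all the $c_i$ needs surjectivity of the evaluation map $H^0(C,s^*T_{\mathcal{X}/C}(-(k+1)\sum_i c_i))\to\bigoplus_i s^*T_{\mathcal{X}/C}\otimes(\mathfrak m_{c_i}^{k+1}/\mathfrak m_{c_i}^{k+2})$, and this holds because the obstructing group $H^1(C,s^*T_{\mathcal{X}/C}(-(k+2)\sum_i c_i))$ is a quotient (by a sheaf supported on the $c_i$) of the group displayed above, hence vanishes for every $k<N$. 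Keeping the corrections small, the resulting section stays inside $\mathcal{X}^0$ because $s$ does. Thus the theorem reduces to producing, through the prescribed points $x_i$, a section $s\subset\mathcal{X}^0$ positive enough that the displayed $H^1$ vanishes.

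To build such a section I would start from the section $s_0$ of hypothesis (1) and correct it with a comb. Over each $c_i$, hypothesis (2) supplies a rational curve in $\mathcal{X}^0_{c_i}$ through $x_i$ and the generic point of the fiber, and likewise one through $s_0(c_i)$ and the generic point; since both meet a general point of the rationally connected fiber, I can join $s_0(c_i)$ to $x_i$ by a connected chain of rational curves inside $\mathcal{X}^0_{c_i}$, all of whose teeth are free. Attaching these chains to the handle $s_0$ produces a comb which, by the smoothing technology for combs with free teeth (\cite{ghs},\cite{kollarrc}), deforms to a genuine section; the smoothing can be carried out within the family of sections passing through the marked points $x_i$, and freeness of the teeth keeps it inside $\mathcal{X}^0$. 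Finally, attaching further free rational curves in general smooth fibers away from the $c_i$ raises the degree and slopes of $s^*T_{\mathcal{X}/C}$ without disturbing the values $s(c_i)=x_i$, until the bundle is positive enough for the displayed $H^1$ to vanish. Together with the previous step, this yields the desired section and proves approximation away from $B$.

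I expect the constrained smoothing of the comb to be the main obstacle: one must guarantee at once that the configuration of handle and fiberwise chains smooths to an honest section, that the smoothing can be forced through the prescribed points $x_i$, and that the section never leaves $\mathcal{X}^0$ by drifting into the omitted part of the special fibers. This is exactly where the force of hypothesis (2) is used, since routing the fiber curves through the generic point of each $\mathcal{X}^0_c$ makes the teeth free, and freeness is what supplies both the smoothability (\`a la Graber--Harris--Starr and Koll\'ar) and the control on positivity needed to kill $H^1$ after the large negative twist $-(N+1)\sum_i c_i$. Balancing the number and degrees of attached teeth against the point and jet constraints is the technical heart of the argument.
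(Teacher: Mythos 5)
Your proposal should first be measured against the right target: the paper itself gives no proof of Theorem (\ref{wa}) --- it is quoted from \cite{ht06} and \cite{ht08} --- so the comparison is with the Hassett--Tschinkel argument. Your overall architecture (recast approximation as realizing finite jets; build a section through the prescribed points by attaching vertical rational curves supplied by hypothesis (2) to a handle section and smoothing the comb; add free teeth in general fibers to gain positivity; keep track of a twisted $H^1$) is exactly their skeleton. But your reduction of jet matching to the single vanishing $H^1(C, s^*T_{\mathcal{X}/C}(-(N+1)\sum_i c_i))=0$ has a genuine gap. Unobstructedness plus surjectivity of the linearized evaluation map only shows that the jet-evaluation morphism from the space of sections is smooth at $s$; hence its image contains a Zariski-open neighborhood of the jet of $s$ inside the affine space of jets extending the already-matched truncation. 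That is openness of the image, not surjectivity: the prescribed jet $\hat x_i$ may lie outside this neighborhood, and your induction from order $k$ to order $k+1$ only produces first-order motion in the prescribed direction --- deforming along $v\in H^0(s^*T_{\mathcal{X}/C}(-(k+1)\sum_i c_i))$ changes the order-$(k+1)$ coefficient by $t\delta$ plus uncontrolled higher-order terms, and shrinking $t$ to control those terms forfeits exact agreement. Since the theorem demands exact matching modulo $\mathfrak{m}_{c_i}^{N_i+1}$, not matching up to an error, this step does not close.

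The cited proof sidesteps precisely this difficulty with the iterated-blowup device: one blows up $\mathcal{X}$ repeatedly along the prescribed jet data, converting the condition ``match $\hat x_i$ to order $N_i$'' into the condition ``pass through one specified point of a fiber of the modified model,'' and passing through a point \emph{exactly} is what comb smoothing with a marked point delivers --- this is the second half of Proposition (\ref{smoothing}), whose conclusion that the deformation still contains $f(p)$ is exact rather than approximate. One must then check that a form of hypothesis (2) persists on the blown-up model, which Hassett--Tschinkel do. A secondary, fixable point in your comb step: hypothesis (2) gives a rational curve through one prescribed point and a general point, so the curve through $x_i$ and the curve through $s_0(c_i)$ need not meet; the standard remedy is to first move the handle (by attaching free teeth and smoothing) so that it meets $\mathcal{X}^0_{c_i}$ at a general point swept out by the very free curves through $x_i$, and this should be made explicit rather than absorbed into ``join by a connected chain.''
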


For strong rational connectedness of surfaces, we know the following result.

\begin{thm}[\cite{xu}]\label{src}
Let $S$ be a log Del Pezzo surface, i.e., $S$ only has quotient singularities and $K_S$ is anti-ample. Then its smooth locus $S^{sm}$ is strongly rationally connected.
\end{thm}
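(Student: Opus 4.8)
The plan is to reduce the statement to the following concrete assertion: for every point $x \in S^{sm}$ there exists a very free rational curve through $x$, i.e. a morphism $f : \P^1 \to S^{sm}$ with $x \in f(\P^1)$ and $f^* T_S$ ample. I would first pass to the minimal resolution $\pi : \tilde S \to S$, with reduced exceptional divisor $E = \sum E_i$. Since $S$ has only quotient (hence log terminal) singularities and $-K_S$ is ample, $\tilde S$ is a smooth projective rational surface and $\pi$ is an isomorphism over the smooth locus, so $S^{sm} \cong \tilde S \setminus E$ and $T_S|_{S^{sm}} = T_{\tilde S}|_{\tilde S \setminus E}$. It therefore suffices to produce, through each point of $\tilde S \setminus E$, a rational curve disjoint from $E$ with ample restricted tangent bundle. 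Writing $K_{\tilde S} = \pi^* K_S + \sum a_i E_i$ with discrepancies $a_i > -1$, the ampleness of $-K_S$ yields $-K_{\tilde S}\cdot C = -K_S \cdot \pi_* C > 0$ for every curve $C$ disjoint from $E$; this positivity is the source of the freeness we are after.

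As a first step I would record ordinary rational connectedness of $S^{sm}$ together with the existence of a very free curve through a general point: this holds because $S^{sm}$ is smooth and rational, and a smooth rationally connected variety carries very free curves through its general points. The substance is then to propagate such a curve to an arbitrary, possibly special, point $x$. For this I would use the comb/smoothing method: attach to a fixed very free curve $C_0$ a connecting rational curve through $x$, forming a connected nodal genus-$0$ curve, and smooth it by the deformation theory of Koll\'ar--Miyaoka--Mori (\cite{kmm}) and Graber--Harris--Starr (\cite{ghs}). Adding sufficiently many free teeth makes the smoothed curve very free, and by construction it passes through $x$. One must arrange that both the teeth and the smoothing stay inside $S^{sm}$, i.e. disjoint from $E$; the ampleness of $-K_S$ supplies the room to choose the attaching curves meeting $E$ in a controlled way (or not at all) and to smooth off any residual intersection with $E$.

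The main obstacle is the behaviour near the singular points. Although $x$ lies in $S^{sm}$, it may lie arbitrarily close to $\Sing S$, and on $\tilde S$ the positivity of the tangent bundle degenerates along $E$: in $-K_{\tilde S} = -\pi^* K_S - \sum a_i E_i$ the discrepancy terms can be negative, so a curve forced to meet $E$ loses precisely the positivity needed for very freeness. The crux is hence a local construction at each quotient singularity: exploiting the cyclic/chain structure of its resolution graph, one must exhibit rational curves through points near the singular point whose proper transforms on $\tilde S$ avoid $E$ (or meet it minimally) while keeping $f^* T_{\tilde S}$ ample, and one must do so uniformly as $x \to \Sing S$. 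I expect this local analysis --- organised by an induction on the resolution graph or a case check of the singularity types $\tfrac{1}{n}(1,q)$ --- to be the technical heart of the proof, with the global comb-smoothing step being comparatively routine once the local curves are in hand.
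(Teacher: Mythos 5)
First, a point of logistics: this theorem is not proved in the present paper at all --- it is the main theorem of \cite{xu} and is imported here as a black box --- so there is no internal proof to compare yours against. Judged on its own terms, your outline identifies the right target (a very free curve in $S^{sm}$ through \emph{every} point) and reasonable global machinery (minimal resolution, combs, smoothing \`a la \cite{kmm} and \cite{ghs}), but it contains a genuine gap at its very first step and defers the actual mathematical content at its last.

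The gap: you assert that $S^{sm}$ is rationally connected, with a very free curve through a general point, ``because $S^{sm}$ is smooth and rational.'' This is false as a general principle for open varieties. An open subset of a smooth rational surface need not contain any complete rational curve at all: $\P^2$ minus a smooth cubic contains no complete curves whatsoever, since the cubic is ample. In your setup the issue is visible on the minimal resolution $\tilde S$: a complete curve contained in $S^{sm}\cong \tilde S\setminus E$ must satisfy $C\cdot E_i=0$ for every exceptional component, and producing very free curves in $E^{\perp}$ is precisely the content of the Keel--McKernan theorem that the smooth locus of a log del Pezzo surface is rationally connected --- a deep result that genuinely uses the ampleness of $-K_S$ and the structure theory of these surfaces, not just rationality. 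Your proposal silently substitutes a false shortcut for this theorem. Beyond that, the step you yourself flag as ``the technical heart'' --- connecting an arbitrary point of $S^{sm}$ to a general one by rational curves avoiding $E$, so that a comb can be smoothed inside $S^{sm}$ --- is exactly where the work of \cite{xu} lies, and leaving it as an expectation (``I expect this local analysis \dots to be the technical heart'') means the proposal is an outline of the difficulty rather than a proof of the statement.
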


On the other hand, we also have the following.

\begin{lemma}\label{duval}
 Notations as in (\ref{main}). A point $c\in C\cap A$ satisfies the assumption (2) of (\ref{general}) if and only if $\mathcal{S}$ is smooth and the fiber $S_c$ of $\mathcal{S}$ over $c$ contains at worst  $A_1$ singularities.
\end{lemma}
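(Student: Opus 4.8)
The plan is to reduce the statement to a local analysis at each singular point of the central fibre and to match ``transversality to a branch'' with ``smoothness of the total space'' through the versal deformation of the singularity. Throughout write $\mathcal{S}\to C$ for the family, and note that at a smooth point of $\mathcal{S}_c$ the projection is a submersion, so $\mathcal{S}$ is automatically smooth there; hence $\mathcal{S}$ is smooth near $\mathcal{S}_c$ if and only if it is smooth at each point of $\Sing(\mathcal{S}_c)$, and $c\in C\cap A$ forces $\Sing(\mathcal{S}_c)\neq\emptyset$. I would also use the computation of Section 3 (equivalently, the classical fact that a normal Gorenstein surface with ample anticanonical class has only rational double points): a degree $6$ hypersurface avoiding $(0,0,1,0)$ and $(0,0,0,1)$ has $-K=\mathcal{O}(1)$ ample and is Gorenstein, so once it is normal its singularities are automatically Du Val. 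Conversely a Du Val surface is normal and, since the two distinguished points of $\P(1,1,2,3)$ carry the non-Du Val quotient singularities $\tfrac14(1,1)$ and $\tfrac19(1,2)$, it avoids them. This already identifies ``$c$ maps into $A^0$'' with ``$\mathcal{S}_c$ has at worst Du Val singularities'' (and shows that for $c\notin A^0$ both sides of the lemma fail), so the only remaining point is to prove that, under this hypothesis, transversality to every branch is equivalent to smoothness of $\mathcal{S}$.

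Fix $p\in\Sing(\mathcal{S}_c)$, a Du Val point; near $p$ the ambient $\P_C(1,1,2,3)$ is a smooth fourfold, so $\mathcal{S}$ is cut out by one equation $F(x,y,z,t)=0$ with $f:=F|_{t=0}$ an ADE normal form and $t$ a local parameter on $C$. First I would check that the family is versal at $p$: the restriction map from degree $6$ forms to the jets of functions at $p$ is surjective up to the order governing the singularity, so $\P^N\to \mathrm{Def}(\mathcal{S}_c,p)=\A^{\mu}$ — and, simultaneously over all singular points, the map to the product of these spaces — is a smooth submersion. Consequently, near $[\mathcal{S}_c]$ the divisor $A$ is the union of the pullbacks of the discriminants $\Delta_p$, its analytic branches are indexed by the points of $\Sing(\mathcal{S}_c)$, and transversality of $C$ to the branch attached to $p$ is the same as transversality of the induced arc $C\to\mathrm{Def}(\mathcal{S}_c,p)$ to $\Delta_p$ in the sense of tangent cones.

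The heart of the matter is then purely local. Smoothness of $\mathcal{S}$ at $p$ means $dF\neq0$, i.e. $\partial_tF(p)\neq0$; since $f$ and its partials vanish at $p$, this says exactly that the Kodaira--Spencer class $\dot f\in T^1=\mathcal{O}/(f,\partial f)$ has nonzero component along the constant monomial $1$, equivalently that the tangent vector of the arc has nonzero coordinate $v_0$ in the versal space. On the other hand $\Delta_p$ is quasi-homogeneous: assigning to the parameter dual to a Tjurina monomial of weighted degree $\delta$ the weight $d-\delta$, the parameter $c_0$ dual to $1$ acquires the strictly largest weight $d$, because $1$ is the unique weighted-degree-zero element of $T^1$. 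Since $f+c_0$ is a smoothing, $\Delta_p$ does not contain the $c_0$-axis, and a weight count then forces the lowest-degree part of $\Delta_p$ to be $c_0^{\mu}$ up to a scalar; thus the tangent cone of $\Delta_p$ is the reduced hyperplane $\{c_0=0\}$. Therefore the arc is transversal to $\Delta_p$ if and only if $v_0\neq0$, which is precisely smoothness of $\mathcal{S}$ at $p$. Running this over all singular points and combining with the first paragraph gives both implications.

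The step I expect to be the main obstacle is the local dictionary of the previous two paragraphs: verifying versality of the degree $6$ system at the (possibly several) singular points — a jet-surjectivity statement that must be read off from the linear system of Section 3 — and pinning down the tangent cone of the ADE discriminant so that the intended meaning of ``transversal to a branch'' (tangent line of $C$ not contained in the tangent cone) coincides with smoothness of the total space, and not with the naive ``intersection multiplicity one'', which already fails at an $A_2$ point. The weighted-homogeneity argument above is what makes this uniform across all Du Val types; the remaining bookkeeping — the normal crossing of the branches for several nodes, and the identification of ``$A^0$'' with ``Du Val'' — is routine given Section 3.
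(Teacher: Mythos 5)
The paper's own ``proof'' of this lemma is only a pointer to the first paragraph of the proof of Proposition~23 in [HT08], so your self-contained deformation-theoretic argument is necessarily a different route. Its core local dictionary --- versality of the degree-6 linear system at the singular points, the identification of smoothness of $\mathcal{S}$ at $p$ with the nonvanishing of the coordinate $v_0$ of the Kodaira--Spencer class along the constant monomial, and the weight count showing that the tangent cone of the discriminant of an ADE singularity is supported on the reduced hyperplane $\{c_0=0\}$ --- is the right mechanism and is sound at Du Val points; it is also the honest content behind the citation.

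There is, however, a genuine gap in your first paragraph: the claim that a normal Gorenstein surface with ample anticanonical class has only rational double points is false. By the classification of normal Gorenstein del Pezzo surfaces such a surface may instead carry a single non-rational (e.g.\ simple elliptic) singularity; concretely, $w^2=z^3+x^6$ is a normal degree-6 hypersurface in $\P(1,1,2,3)$ avoiding both $(0,0,1,0)$ and $(0,0,0,1)$ whose unique singular point is a simple elliptic singularity of type $\tilde{E}_8$ (it is a hypersurface with an isolated singularity, hence normal by Serre's criterion). Since the paper defines $A^0$ as the locus of \emph{normal} surfaces, your identification of ``$c$ maps into $A^0$'' with ``$\mathcal{S}_c$ has at worst Du Val singularities'' fails, and with it the forward implication of the lemma as you have argued it. The failure is not cosmetic: your weighted-homogeneity argument breaks down exactly at such points, because the Tjurina algebra of $w^2+z^3+x^6$ contains the monomial $x^4z$ of weighted degree equal to $d=6$, so the dual deformation parameter has weight $0$, the discriminant contains the equisingular stratum, and its tangent cone is no longer the hyperplane $\{c_0=0\}$ with multiplicity $\mu$. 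To close the gap you must either prove directly that an arc through a normal non-Du-Val surface cannot meet every branch of $A$ transversally, or read $A^0$ as the (open, dense) locus of surfaces with at worst Du Val singularities --- presumably the intended meaning, but not what you established. The versality of the degree-6 system at the singular points, which you flag as the main obstacle, is by contrast routine (the total Tjurina number of a Du Val del Pezzo surface of degree 1 is at most $8$ and $|-6K_S|$ separates the required jets) and is not where the difficulty lies.
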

\begin{proof}
 See the proof of (\cite{ht08}, 23).
\end{proof}

Putting these two results together, for points in $A$, we have

\begin{corollary}
 Let $S/k(C)$ be a smooth log Del Pezzo surface of degree 1, where $C$ is a smooth curve. Assume $S$ admits a model giving a transversal family over $C$. If $c$ is a point in $C\cap A^0$, then $S$ satisfies weak approximation at the place $c$.
\end{corollary}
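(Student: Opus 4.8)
The plan is to verify the hypotheses of Theorem \ref{wa} (the Hassett--Tschinkel criterion), using Theorem \ref{src} to supply the rational connectedness input in condition (2). Let $c\in C\cap A^0$. The first step is to understand the special fiber $\mathcal{S}_c$ explicitly. By the Lemma preceding this corollary, the assumption (2) of Definition \ref{general} guarantees that the total space $\mathcal{S}$ is smooth along $\mathcal{S}_c$ and that $\mathcal{S}_c$ has at worst Du Val singularities. Since $\mathcal{S}_c$ is a degree $1$ del Pezzo surface (so $-K_{\mathcal{S}_c}$ is ample) with only Du Val (hence quotient) singularities, it is precisely a log del Pezzo surface in the sense of Theorem \ref{src}. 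Therefore its smooth locus $\mathcal{S}_c^{sm}$ is strongly rationally connected.

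The second step is to set up the open subset $\mathcal{X}^0$ required by Theorem \ref{wa}. Here we take $\mathcal{X}=\mathcal{S}$ and let $\mathcal{X}^{sm}$ be the smooth locus of $\pi:\mathcal{S}\to C$; since the only singularities of $\mathcal{S}_c$ are the Du Val points, which lie in finitely many fibers, the complement of $\mathcal{X}^{sm}$ is a finite union of points in fibers, and $\mathcal{X}^0:=\mathcal{X}^{sm}$ is a large open subset. Condition (2) of Theorem \ref{wa} is now exactly strong rational connectedness of $\mathcal{S}_c^{sm}$: for each $x\in \mathcal{S}_c^{sm}$ strong rational connectedness produces $f:\P^1\to \mathcal{S}_c^{sm}$ through $x$ with $f^*T_{\mathcal{S}_c}$ ample, and an ample (in particular generically surjective onto the positive part) such curve necessarily dominates $\mathcal{S}_c^{sm}$, so it passes through the generic point of the fiber. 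Thus the rational-curve condition holds at every place, and in particular at $c$.

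The remaining step is condition (1) of Theorem \ref{wa}, the existence of a section $s:C\to \mathcal{X}^0$. Since $S/K(C)$ is a smooth del Pezzo surface, it is rationally connected, and by the theorem of Graber--Harris--Starr (together with Koll\'ar--Miyaoka--Mori, cited in the introduction) the fibration $\mathcal{S}\to C$ admits a section; moreover one can arrange the section to avoid the finitely many Du Val points so that it lands in $\mathcal{X}^0=\mathcal{X}^{sm}$. With both hypotheses of Theorem \ref{wa} in place, we conclude that sections of $\mathcal{X}^0\to C$ satisfy weak approximation away from $B=\overline{C}/C$; since $c\in C$ is an ordinary (non-boundary) place, this yields weak approximation for $S$ at $c$.

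The point where one must be slightly careful is the deduction that the ample rational curve furnished by strong rational connectedness actually meets the generic point of $\mathcal{S}_c^{sm}$, rather than merely passing through the prescribed point $x$; this is why Theorem \ref{src} is phrased with $f^*T_{\mathcal{S}_c}$ ample rather than merely nef, as ampleness forces the family of such curves through a fixed point to cover a dense open subset. I expect this step to be essentially formal given Theorem \ref{src}, so the corollary is genuinely the ``easy'' case---the real difficulty, reserved for the main theorem, arises only at the places in $H_1$ and $H_2$ where the special fiber fails to be log del Pezzo and one must resolve singularities and correct intersection numbers with auxiliary curves.
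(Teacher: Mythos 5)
Your proposal is correct and follows exactly the route the paper intends: the paper gives no separate proof of the corollary but simply says it follows by ``putting these two results together,'' namely the preceding Lemma (which identifies the fiber over $c\in C\cap A^0$ as a Du Val del Pezzo surface with smooth total space) and Theorem \ref{src}, fed into the Hassett--Tschinkel criterion of Theorem \ref{wa}. Your additional remarks on producing the section via Graber--Harris--Starr and on why a very free curve through $x$ can be taken to pass through the generic point are exactly the standard details the paper leaves implicit.
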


\subsection{Deformation theory}\label{deformation} In our discussion, we need to apply the technique of smoothing a nodal curve to a smooth curve. This was first used in \cite{kmm}, and then improved in \cite{ghs}. For the proof of the statements in this section, see \cite{ht06}, \cite{ht08} and \cite{ht}.

\begin{defn}
 A projective nodal curve $C$ is {\it tree-like} if
\begin{enumerate}
 \item each irreducible component of $C$ is smooth; and
\item the dual graph of $C$ is a tree.
\end{enumerate}
\end{defn}

 Let $f:C\to  Y$ denote an unramified morphism
whose image is a nodal curve. The restriction homomorphism
$f^*\Omega^1_Y \to \Omega^1_C$ is surjective and the dual to its kernel is still locally free. This
is denoted by $\mathcal{N}_f$ and coincides with $\mathcal{N}_{C/Y}$ when $f$ is an embedding. First order
deformations of $f: C\to Y$ are given by $H^0(C,\mathcal{N}_f)$; obstructions are given by $H^1(C,\mathcal{N}_f)$.
When $D$ is a union of irreducible components of $C$, the analogous extension takes
the form:
$$0 \to \mathcal{N}_f|D \to \mathcal{N}_f \otimes \mathcal{O}_D \to Q \to 0.$$
Here $Q$ is a torsion sheaf supported on the locus where $D^c:=\overline{C\setminus D}$ meets $D$, with length one at each point in the locus.

\begin{prop}[\cite{ht06},  24]\label{smoothing}
  Let $C$ be a tree-like curve, $Y$ a smooth algebraic space,
and $f : C \to Y$ an immersion with nodal image. Suppose that for each
irreducible component $C_l$ of $C$, $H^1(C_l,\mathcal{N}_f\otimes \mathcal{O}_{C_l}) = 0$ and $\mathcal{N}_f \otimes \mathcal{O}_{C_l}$ is globally generated. Then $f : C \to Y$ deforms to an immersion of a smooth curve into $Y$.
Suppose furthermore that $P = \{p_1, . . . , p_w\} \subset C$ is a collection of
smooth points such that for each component $C_l$, $H^1(\mathcal{N}_f \otimes \mathcal{O}_{C_l}(-P)) = 0$
and the sheaf $\mathcal{N}_f \otimes \mathcal{O}_{C_l}(-P)$ is globally generated. Then $f : C \to Y$
deforms to an immersion of a smooth curve into $Y$ containing $f(P)$.
\end{prop}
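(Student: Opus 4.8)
The plan is to run this through the deformation theory of the map $f$ recalled above, where first-order deformations of $f:C\to Y$ are $H^0(C,\mathcal{N}_f)$ and obstructions live in $H^1(C,\mathcal{N}_f)$. Everything reduces to two global properties of the (locally free) sheaf $\mathcal{N}_f$ on the reducible curve $C$: the vanishing $H^1(C,\mathcal{N}_f)=0$, which makes the deformation functor of $f$ unobstructed, and the global generation of $\mathcal{N}_f$, which forces a general deformation to separate the two branches at each node instead of preserving it. Granting these, a general first-order deformation integrates (by unobstructedness) to a one-parameter family whose general member is an immersion of a smooth curve, which is the first assertion.

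The first step is to deduce both global properties from the component-wise hypotheses by induction on the number of components, and here the tree-like hypothesis is what makes the induction run. Since the dual graph is a tree, any connected union of components $D$ with more than one component has a terminal component (leaf) $C_1$ meeting $D':=\overline{D\setminus C_1}$ in a single node $p$. As $\mathcal{N}_f$ is locally free, tensoring the structure sequence of $D=C_1\cup D'$ with $\mathcal{N}_f$ gives the exact sequence
$$0 \to (\mathcal{N}_f\otimes\mathcal{O}_{C_1})(-p) \to \mathcal{N}_f\otimes\mathcal{O}_D \to \mathcal{N}_f\otimes\mathcal{O}_{D'} \to 0,$$
with no $\mathrm{Tor}$ correction, so the torsion subtlety recorded above does not intervene. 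The one local input I need is that on a smooth component global generation of $E:=\mathcal{N}_f\otimes\mathcal{O}_{C_1}$ already yields $H^1(E(-p))=0$: in $0\to E(-p)\to E\to E|_p\to 0$ the evaluation $H^0(E)\to E|_p$ is surjective by global generation, so the connecting map to $H^1(E(-p))$ is zero and $H^1(E)=0$ carries over. Feeding this together with the inductive vanishing $H^1(D',\mathcal{N}_f\otimes\mathcal{O}_{D'})=0$ into the long exact sequence gives $H^1(D,\mathcal{N}_f\otimes\mathcal{O}_D)=0$, and global generation propagates along the same sequence because the $H^1$ of the kernel vanishes. Taking $D=C$ gives the two global properties.

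With $H^1(C,\mathcal{N}_f)=0$ the deformation functor is unobstructed; it remains to see that a general deformation genuinely smooths the nodes. For this I would analyze each node locally: whether a deformation separates the two branches of the nodal image is detected by a one-dimensional space $T^1$ at the node, and the evaluation $H^0(C,\mathcal{N}_f)\to\bigoplus_{\text{nodes}}T^1$ factors through the fibers of $\mathcal{N}_f$, so global generation makes it surjective and a general section is nonzero in the smoothing direction at every node simultaneously. Because the deformation is unobstructed, this first-order smoothing is realized by an actual family whose general fiber is smooth. For the refined statement with prescribed smooth points $P=\{p_1,\dots,p_w\}$, I would replace $\mathcal{N}_f$ by its subsheaf of sections vanishing along $P$, the twist $\mathcal{N}_f(-P)$; sections vanishing at $P$ are exactly those deformations that fix the images $f(p_i)$. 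The extra hypotheses $H^1(\mathcal{N}_f\otimes\mathcal{O}_{C_l}(-p))=0$ and global generation of $\mathcal{N}_f\otimes\mathcal{O}_{C_l}(-p)$ are precisely what the same induction needs for the twisted sheaf, so it yields $H^1(C,\mathcal{N}_f(-P))=0$ and global generation of $\mathcal{N}_f(-P)$ away from $P$, producing a smoothing through $f(P)$.

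I expect the main obstacle to be this last step, namely certifying that global generation produces a genuine smoothing of the nodes rather than only an unobstructed deformation: it requires the local $T^1$-analysis at each node and the verification that the evaluation $H^0(C,\mathcal{N}_f)\to\bigoplus_{\text{nodes}}T^1$ is surjective. The cohomological induction is the technical backbone but is essentially formal once the tree-like hypothesis and the twist-down lemma are in place; the tree-like hypothesis, by contrast, is genuinely needed there, since a cycle in the dual graph would couple the components and break the vanishing argument.
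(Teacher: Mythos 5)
The paper does not actually prove this proposition: it is quoted verbatim from \cite{ht06}, Proposition 24, and the preamble of Section 2.3 explicitly defers all proofs in that section to \cite{ht06}, \cite{ht08} and \cite{ht}. Your reconstruction --- the induction over the tree via the decomposition sequence $0 \to (\mathcal{N}_f\otimes\mathcal{O}_{C_1})(-p) \to \mathcal{N}_f\otimes\mathcal{O}_D \to \mathcal{N}_f\otimes\mathcal{O}_{D'} \to 0$ with the twist-down lemma, yielding $H^1(C,\mathcal{N}_f)=0$ and global generation, followed by the $T^1$-analysis at the nodes and the replacement of $\mathcal{N}_f$ by $\mathcal{N}_f(-P)$ for the pointed version --- is exactly the standard Graber--Harris--Starr/Hassett--Tschinkel argument used in the cited source, and it is correct (the only nitpick being that one does not need, and does not quite get, surjectivity onto $\bigoplus_{\text{nodes}} T^1$ from global generation alone; what one gets, and what suffices, is that the sections vanishing in each individual smoothing direction form a proper subspace, so a general section smooths every node of $C$ simultaneously).
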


\section{The Proof of the Main Theorem}

In this section, we study the places of $C$ mapped into $H_I$ or $H_{II}$. 

\subsection{Singular fibers} We first introduce some standard notations for quotient singularities: A singularity is written as $\frac{1}{r}(a_1,a_2,...,a_n)$ if \'etale locally it is given by the quotient of the action of $\Z/r$ on $\A^n$ by
$$(x_1,x_2,...,x_n)\to (\xi^{a_1}x_1,\xi^{a_2}x_2,...,\xi^{a_n}x_n),$$
where $\xi$ is a primitive $r$-root of unity. We also assume $\gcd\{a_1,...,a_n,r\}=1$.

In the following, we will prove structural results for the singular fibers, which are parametrized by points in $H_I$ and $H_{II}$. We will use the following simple lemma repeatedly.
\begin{lemma}\label{simple}
Let $g:S_1 \to S_2$ be a birational morphism between smooth projective surfaces. Then for any irreducible curve $C$ on $S_1$, $C^2\le g(C)^2$.
\end{lemma}
\begin{proof}Left to the reader.
\end{proof}

\noindent{\bf Case }(I): For a place $c\in C\cap H_I$, because of the transversality assumption (\ref{general}.1), we know that the global family $\mathcal{S}$ over $\mathcal{O}_{c,C}$ has a unique quotient singularity of type $\frac{1}{2}(1,1,1)$. Furthermore, if we write the coordinates of $\P(1,1,2,3)$ as $(x,y,z,w)$, a point in $H_I$ gives a surface  with an equation without the term $z^3$. So a general equation in this form has a unique singularity at $P_I=(0,0,1,0)$. If we dehomogenize the equation with respect to $z$, the lowest degree term will have the form
$$wf_1(x,y)+f_2(x,y)=0,$$
where $f_1$ (resp. $f_2$) is a general linear (resp. quadratic) form  of variables $x$ and $y$. So the singularity of the fiber $S_c$ is of the form
$$(z^2=xy\subset \A^3)/(x,y,z)\to (-x,-y,-z),$$
which is of type $\frac{1}{4}(1,1)$. A single blow-up of $\mathcal{S}$ at the singular point $P_I$ gives a resolution $\pi:\mathcal{T}\to \mathcal{S}$, with an exceptional divisor $E_1\cong \P^2$, whose normal bundle is isomorphic to $\mathcal{O}(-2)$. Let $E_0$ be the birational transform of $S_c$, then $\pi|_{E_0}: E_0\to S_c$ is the minimal resolution of $S_c$, with the exceptional curve $R=E_0\cap E_1$ with self-intersection $-4$. Thus $R\subset E_1$ is a smooth conic. The configuration is illustrated in Figure 1.

\begin{center}
\setlength{\unitlength}{1mm}
\begin{picture}(100,60)
\multiput(30,30)(30,0){1}{\line(1,0){25}}
\multiput(30,30)(30,0){1}{\line(-1,-1){15}}
\multiput(30,30)(30,0){1}{\line(-1,1){15}}
\multiput(55,30)(30,0){1}{\line(1,1){15}}
\multiput(55,30)(30,0){1}{\line(1,-1){15}}
{\Small
\put(40,31){$R$}
\put(45,31){4}
\put(45,27){-4}

\put(42,15){$E_0$}
\put(38,40){$E_1\cong \P^2$}
}

\put(38, 5){\mbox{Figure 1}}
\end{picture}
\end{center}

In this case, $E_0$ is a surface such that the only $K_{E_0}$-negative curve is a smooth rational curve with self-intersection $-4$. 
\begin{lemma}\label{case1} There exists a birational contraction $\tilde{g}: E_0\to F_4$. In particular, $\tilde{g}(R)=B$, which is the section of $F_4 \to \P^1$ with the self-intersection number to be $-4$. \end{lemma}
\begin{proof}
We first remark that $R$ is the only curve on $E_0$ with the self-intersection less than $-1$. The surface minimal model program gives a morphism $g: E_0\to F_n$ for some Hirzebruch surface $F_n$. Moreover, we claim that in each step we can choose a $-1$-curve which is not  the image of $R$, i.e., $g(R)$ is a curve on $F_n$. In fact, the Mori cone of an intermediate surface $\tilde{F}$  has more than one extremal ray. If the image of $R$ on $\tilde{F}$ is a curve, then any extremal ray  $\R_{\ge 0}[C]$ which is not generated by the image of $R$ satisfies $K_{\tilde{F}}\cdot C<0$, thus after contracting $C$ we get a surface on which $R$ is not a point and we can proceed by induction.

 If the contraction $g$ gives a morphism $E_0\to F_n$ such that $g(R)$ is a section of $F_n\to \P^1$, then $E_0\to F_n$ factors through  $\tilde{F}$ where $\tilde{F}\to F_n$ is a morphism which blows up $4+g(R)^2$ distinct points on $g(R)\subset F_n$. Since the image of $R$ on $\tilde{F}$ is a section of $\tilde{F}$ with self-intersection $-4$ and $\tilde{F}$ does not have any other irreducible curve with self-intersection less than $-1$ except possibly $g(R)$ (cf. \eqref{simple}), we know that every fiber of $\tilde{F}\to \P^1$ has at most two components. By contracting the components in the reducible fibers which do not meet the image of $R$, it is easy to see that $\tilde{F}$ yields a morphism to $F_4$ such that the image of $R$ is the $-4$-curve on $F_4$. It also follows from \eqref{simple} that if $E_0$ yields a morphism to $F_n$ for some $n\le 2$, then the image of $R$ on $F_n$ must be the section with negative self-intersection. 
 
Thus we can assume $n=0$ or $1$ and in these cases we can indeed assume that there is a morphism $h$ giving a contraction from $E_0$ to $\P^2$.  Since we can assume $g(R)$ is not a section on $F_n$, we conclude that $R$ is not contracted by $h$.
It follows from $K^2_{E_0}=0$ and the fact that $R$ is the only curve whose intersection number with $K_{E_0}$ is nonnegative, we obtain that $g$ is the blowing up of 9 distinct points $P_i (1\le i \le 9)$ on $\P^2$. Let $D_i$ be the class of the exceptional curve which is contracted to $P_i$ by $g$. Then the class of $R$ is
$$g^*\mathcal{O}(m)-a_1D_1-\cdots-a_9D_9\mbox{ with }a_1\ge\cdots\ge a_9\ge 0. $$
and $K_{E_0}\sim g^*\mathcal{O}(3)+D_1+\cdots +D_9$. Computing $R^2$ and $R\cdot K_{E_0}$, we have
$$m^2-a_1^2-\dots-a_9^2=-4\mbox{ and }-3m+a_1+\cdots+a_9=2.$$ Eliminating the variable $m$,  we have the equation
$$9(a_1^2+\dots +a_9^2)=(a_1+\dots + a_9)^2-4(a_1+\dots +a_9)+40.$$
Since $$9(a_1^2+\dots +a_9^2)\ge(a_1+\dots + a_9)^2,$$ and
we have $5\le 3m+2=a_1+\dots+a_9<10$. The only solutions in nonnegative integers are
$$(a_1,\dots,a_9)=(1,\dots,1,0)\mbox{ or }(1,1,1,1,1,0,\dots,0),$$
and it is easy to get the conclusion in these cases. For example, in the case $(a_1,\dots,a_9)=(1,\dots,1,0)$, we can choose $\tilde{g}$ to be the morphism which contracts the birational transform of the lines connecting $P_iP_9$ ($1\le i \le 8$).

\end{proof}

\begin{corollary}\label{au1}
There exists a rational curve in $E_0$ meeting $R$ transversally in a single point and intersecting no other exceptional divisor of the morphism.

\end{corollary}

\begin{proof}

Since there exists $\tilde{g}:E_0\to F_4$, such that $\tilde{g}(R)=B$, we can choose the rational curve to be the birational transform of a general fiber of $F_4\to \P^1$.
\end{proof}

\begin{rmk}Let $\overline{F}_4$ be the singular surface which is obtained by contracting the $-4$-curve in $F_4$. We know that the morphism $\tilde{g}:E_0\to F_4$ yields a morphism $\overline{g}:S_c\to \overline{F}_4$, which is a blow-up of $8$ general (smooth) points on $\overline{F}_4$. Since $-1$-curves are rigid, after a possible \'etale base change of $C$ near $c$, we get 8 families of $-1$-curves in $\mathcal{S}$ over $C$. Contracting them, we obtain a morphism $\mathcal{S}\to \mathcal{S}^m$ such that $S^m_t\cong \P_t^2$ for  general geometric points $t$ in $C\setminus \{c\}$, and $S^m_c\cong \overline{F}_4$. In particular, we conclude that  $\overline{F}_4$ is a {\it Manetti surface}. For more discussion on related topics, see \cite{HP10}.
\end{rmk}

\noindent{\bf Case}(II): The singular point $(0,0,0,1)$ in $P(1,1,2,3)$ is of type $\frac{1}{3}(1,1,2)$. The equation of  a degree 6 surface passing through $(0,0,0,1)$ will have the form
$$wf_3(x,y,z)+f_6(x,y,z)=0,$$
where $f_3$ and $f_6$ are degree 3 and degree 6 homogeneous equations in $\P(1,1,2)$.
 If $f_3$ and $f_6$ are general, after dehomogenizing the equation with respect to $w$, we see that the singularity $(0,0,0,1)$ of $S_c$ is analytically isomorphic to 
$$(xz=y^3\subset \mathbb{A}^3)/\mu_3,$$
which is of type $\frac{1}{9}(1,2)$. 

After doing the `economic resolution' of the local model of $\mathcal{S}$ as in \cite{re87}, the fiber consists of 3 components: the birational transform $E_0$ of $S_c$ and two exceptional divisors $E_1\cong \P^2$, $E_2\cong F_2$. Furthermore, we have $E_0\cap E_1=R_1$ the $(-2)$-curve in $E_0$ and a line in $E_1$; $E_0\cap E_2=R_2$ the $(-5)$-curve on $E_0$ and of class $e+3f$ ($e$ is the class of the section with negative self-intersection and $f$ is the class of the fibers) in $E_2$ and $E_1\cap E_2=R_3$ a line in $E_1$ and the section of class $e$ in $E_2$. The configuration is illustrated in Figure 2.

\begin{center}
\setlength{\unitlength}{1mm}
\begin{picture}(100,60)
\multiput(50,30)(30,0){1}{\line(0,1){25}}
\multiput(50,30)(30,0){1}{\line(-1,-1){25}}
\multiput(50,30)(30,0){1}{\line(1,-1){25}}
{\Small
\put(43,50){$R_3$}
\put(26,12){$R_1$}
\put(71,12){$R_2$}
\put(50,15){$E_0$}
\put(65,40){$E_2\cong F_2$}
\put(25,40){$E_1\cong \P^2$}
\put(47,45){1}
\put(52,45){-2}
\put(38,20){1}
\put(41,18){-2}
\put(56,18){-5}
\put(61,20){4}
}

\end{picture}
\end{center}
$$\mbox{Figure 2}$$

From the equation
$$wf_3(x,y,z)+f_6(x,y,z)=0$$
 of $S_c$, we know that the partial resolution $S^1\to S_c $ which only extracts $R_2$ yields a morphism $h:S^1\to \P(1,1,2)$, such that $h(R_2)=R'$ is the vanishing locus $V(f_3)$.  The morphism $h$ blows up the 9 intersection points of $R'$ with the vanishing locus $V(f_6)$. 

Each exceptional curve of $h$ gives a rational curve $F$ in $E_0$, which meets $R_2$ transversally in a single point but not $R_1$. 
Since $E_0\setminus R_1\cup R_2$ is strongly rationally connected (see \eqref{src}), the usual trick of adding teeth which are free curves in $E_0\setminus R_1\cup R_2$ to the handle  $F$ and deforming the comb (cf. \cite{kollarrc}, II.7) allows us to assume that there exists   a  very free curve  $\varphi:\P^1\to E_0$ such that $\varphi(\P^1)\cdot R_2=1$ and $\varphi(\P^1)\cdot R_1=0$. 
By considering small deformations of $\varphi$, we choose 5 such curves $F_i (1\le i\le 5)$ and construct a comb $\phi:R^c\to E_0$ with 5 teeth such that the restriction of $\phi$ to the handle is an isomorphism over $E_0$ and each tooth is mapped isomorphically to $F_i$.  Denote by $P_i=F_i\cap R_2 $, an easy computation (see \eqref{smoothing}) shows that 
$$H^1(R^c, \mathcal{N}_{\phi})\cong H^1(R_2, \mathcal{N}_{R_2}(\sum^5_{i=1} P_i ))\cong H^1(\P^1, \mathcal{O}_{\P_1})=0.$$ 
Furthermore, it follows from \eqref{smoothing} that we can smooth $\phi$ to get a morphism $\psi: \P^1\to E_0$ with $\psi(\P^1)\cdot R_2=0$ and $\psi(\P^1)\cdot R_1=1$.

To summarize we have the following statement.
\begin{prop}\label{au}
For each $R_i$ ($i=1$ or $2$), there exists a rational curve in $E_0$ meeting $R_i$ transversally in a single point but not intersecting the other component $R_{3-i}$.

\end{prop}

\begin{rmk}Besides the strong rational connectedness of $S^{sm}$, the statement of \eqref{au1} and \eqref{au} is another part of Hypothesis 14 (Key Hypothesis) in \cite{ht} when they deal with ordinary singularities. As observed in \cite{ht},
 in general there is an obstruction in the Neron-Severi group for the existence of such auxiliary curves for log Del Pezzo surfaces. For example, for the Cayley cubic surface $$S:xyz+yzw+zwx+wxy=0,$$
if we take the minimal resolution $T$, then for any $D_i$ an exceptional divisor, there does not exist any curve which meets $D_i$ transversally at one point but avoids other exceptional curves, because the sum of all $D_i$ is divisible by 2 in the Neron-Severi group.

\end{rmk}

\subsection{Moving sections}
 In this subsection, we aim to explain how we deform a given section to a new section with a given prescribed jet data.  Our approach is  similar to the one in \cite{ht06} and \cite{ht}, namely, attaching rational curves on the special fibers $T_c$   of $\mathcal{T}$ (the resolution of $\mathcal{S}$) over $c$, though we have to deal with  different configurations of divisors in the special fiber. We will do the harder case for places in $H_{II}$ and leave the argument for places in $H_{I}$ to the reader.\\

\noindent {\bf Step 1: Moving sections to $E_0$}.

In this step, we want to show that there always exists a section meeting the fiber $T_c$ in $E_0$.

Given a section $C$, by attaching sufficiently many free curves in
general fibers we can assume it is free.  

 If it meets the special fiber in $E_1$, then we can find a line $C_1$ passing through the intersection point
of $C$ with $E_1$ which we can assume meets $R_1$ and $R_3$ at general
points. We choose $C_2$ to be the ruling curve of $E_2$ containing the point $C_1\cap R_3$ and $C_3$ a curve given by
(\ref{au}) meeting $R_2$ transversally at the point  $C_2 \cap R_2$. Gluing $C$ and $C_i$ ($1\le i \le 3$) together in the obvious
way, we get a tree-like curve $f:\overline{C}\to \mathcal{T}$. Since
$\mathcal{N}_f|_{C_1}\cong \mathcal{O}_{\P^1}\oplus \mathcal{O}_{\P^1}(1)$ and
$\mathcal{N}_f|_{C_2}\cong \mathcal{O}_{\P^1}\oplus \mathcal{O}_{\P^1}$, it
follows from (\ref{smoothing}) that we can deform $\overline{C}$ to a
smooth curve, which is a section meeting $E_0$ by computing the intersection number. The configuration is illustrated in Figure 3.

\begin{center}
\setlength{\unitlength}{1.0mm}
\begin{picture}(100,60)
\put(50,30){\line(0,1){25}} \put(50,30){\line(-1,-1){25}}
\put(50,30){\line(1,0){40}} \put(13,40){\line(1,0){35}}
\put(15,35){\Small original section} {\thicklines
\put(52,52){\line(-1,-3){13}} \put(44,55){\line(2,-3){20}}}
\put(60.66,15){\line(0,1){25}}
\multiput(5,8)(2,0){23}{\line(1,0){1}}
 \put(15,10){\Small new section}

{\Small

\put(20,42){$C$} \put(55,40){$C_2$} \put(55,20){$C_3$}
\put(40,30){$C_1$}

\put(60,8){$E_0$}
\put(80,50){$E_2$}
\put(20,50){$E_1$}
\put(20,3){$R_1$}
\put(50,57){$R_3$}
\put(92,30){$R_2$}
}
$\put(50,0){Figure 3}$
\end{picture}
\end{center}

If the section $C$ meets $E_2$, then we choose a general curve $C_1$ with class $e+2f$ in $E_2$, which meets $R_2$ at three general points but not $R_3$.
Applying (\ref{au}), we can choose free curves $C_2$ and $C_3$ in $E_0$ which meet two of these three points.
Similarly, we can glue $C$ and $C_i$ ($1\le i \le 3$) together to get a tree-like curve, which can be deformed to a section meeting $E_0$. The configuration is illustrated in Figure 4.

\begin{center}
\setlength{\unitlength}{1.0mm}
\begin{picture}(100,60)
\put(50,30){\line(0,1){25}}
\put(50,30){\line(-1,-1){25}}
\put(50,30){\line(1,0){40}}
\put(30,40){\line(1,0){35.3}}

\put(17,36){\Small original section}
{\thicklines
\put(72.5,35){\line(-1,-1){20}}

\put(91.5,35){\line(-1,-1){20}}
}
\multiput(20,13)(2,0){23}{\line(1,0){1}}

\put(5,10){\Small new section}
\bezier{500}(64,50)(70,0)(80,40)
\bezier{500}(80,40)(85,45)(90,10)

{\Small

\put(45,42){$C$}
\put(80,42){$C_1$}
\put(51,20){$C_2$}
\put(80,20){$C_3$}

\put(50,0){$E_0$}
\put(80,50){$E_2$}
\put(20,50){$E_1$}
\put(23,3){$R_1$}
\put(50,57){$R_3$}
\put(92,30){$R_2$}
}

\end{picture}
\end{center}
$$\mbox{Figure 4}$$

\noindent {\bf Step 2: Moving sections out of $E_0$}.

From the last step, we know there always exists a section $C$ meeting $T_c$ in $E_0$. In this step, we show that for any component $E_i$ of $T_c$, there is a section which meets $E_i$.

 Let $C_1$ (resp. $C_2$) be a curve in $E_0$ which meets
$R_1$ (resp. $R_2$) transversally at one point and contains the point $C\cap E_0$. The existence of $C_1$ and $C_2$ is proved in (\ref{au}). Gluing
the curves together, a similar computation to the one in Step 1 using \eqref{smoothing} shows that we can smooth
it. Then the computation of the intersection number shows that the
smoothing curve is a section meeting $E_1$ (resp. $E_2$). The configuration is illustrated in Figure 5.

\begin{center}
\setlength{\unitlength}{1mm}
\begin{picture}(100,60)
\multiput(50,30)(30,0){1}{\line(0,1){25}}
\multiput(50,30)(30,0){1}{\line(-1,-1){25}}
\multiput(50,30)(30,0){1}{\line(1,0){40}}

\put(-5,6){\Small original section}
\put(-5,36){\Small new section}
\multiput(5,40)(2,0){20}{\line(1,0){1}}

\multiput(0,10)(2,0){1}{\line(1,0){48}} {\thicklines
\put(50,8){\line(-1,1){25}}}

{\Small \put(50,0){$E_0$} \put(80,50){$E_2$} \put(20,50){$E_1$} \put
(20,12){C} \put(30,29){$C_1$}}

\put(45, -10){\mbox{Figure 5}}
\end{picture}
\end{center}

\noindent{\bf Step 3: Moving sections in the same component}

  We apply (\cite{ht}, 22) in this step.  Thus it suffices to check that for $E_i$ and a point $p\in
E_i^{sm}$, we can find a connected nodal curve $Z$ of genus zero,
distinguished smooth points $0, \infty \in Z$ in the same
irreducible component, and an unramified morphism $h$
mapping $Z$ into the special fiber with the following properties
\begin{enumerate}
\item  $h(0) = p$ and $h(\infty) = q$ a general point in $E_i^{sm}$;
\item  each irreducible component of the special fiber intersects $C$ with degree
zero;
\item $h$ takes $Z$ to the open subset of the special fiber with normal
crossings singularities of multiplicity at most two; $f^{-1}(\cup
R_i)\subset Z^{sing}$ and at points of  $R_i\cap h(Z)$ ($1 \le i\le
3$) there is one branch of $Z$ through each component of the special
fiber; and
\item $\mathcal{N}_h(-p)$ is globally generated.
\end{enumerate}

For $E_0$, this directly follows from the strong rational connectedness of  the smooth locus
$E_0\setminus(R_1\cup R_2)$ which is isomorphic to $ S^{sm}_c.$

For $E_1$, the reducible curve $Z=\cup_{1\le i\le 4}C_i$ depicted in Figure 6
will give the curve as above, where $C_1$ is a line containing $p$ and
$q$; $C_2$ is a ruling of $R_2$; $C_3$ (resp. $C_4$) is given by
(\ref{au}) which meets $R_1$ (resp. $R_2$) transversally at one point
but not $R_2$ (resp. $R_1$). The configuration is illustrated in Figure 6.

\begin{center}
\setlength{\unitlength}{1.0mm}
\begin{picture}(100,60)
\put(50,30){\line(0,1){25}} \put(50,30){\line(-1,-1){25}}
\put(50,30){\line(1,0){40}} {\thicklines
\put(52,52){\line(-1,-3){13}} \put(44,55){\line(2,-3){20}} }
\put(60.66,15){\line(0,1){25}} \bezier{500}(35,20)(42,27)(60,10)

\put(46,40){ $\bullet $ } \put(42,28){ $\bullet$ }

{\Small \put(45,42){p} \put(46,30){q}

 \put(55,40){$C_2$} \put(55,20){$C_3$} \put(39,30){$C_1$}
\put(52,10){$C_4$}

\put(50,0){$E_0$} \put(80,50){$E_2$} \put(24,50){$E_1$}
\put(20,3){$R_1$} \put(50,57){$R_3$} \put(92,30){$R_2$} }

\end{picture}
\end{center}
$$\mbox{Figure 6}$$

Similarly for $E_2$, the reducible curve $Z=\cup_{1\le i\le 4}C_i$ depicted in Figure 7 gives the
curve which we are looking for. The curve $C_1$ has the class $e+2f$ and contains $p$ and $q$;
$C_2$, $C_3$ and $C_4$ are three curves given by (\ref{au}) meeting
$R_2$ transversally at one point but not $R_1$. The configuration is illustrated in Figure 7.

\begin{center}
\setlength{\unitlength}{1.0mm}
\begin{picture}(100,60)
\put(50,30){\line(0,1){25}} \put(50,30){\line(-1,-1){25}}
\put(50,30){\line(1,0){40}}

 {\thicklines \put(72.5,35){\line(-1,-1){20}}
\put(82,35){\line(-1,-1){20}} \put(91.5,35){\line(-1,-1){20}} }

 \bezier{500}(64,50)(70,0)(80,40) \bezier{500}(80,40)(85,45)(90,10)

\put(64.2,40){$\bullet$} \put(77.6,34){$\bullet$}

 {\Small \put(62,40){p} \put(75,36){q}

\ \put(80,42){$C_1$} \put(51,20){$C_2$} \put(62,20){$C_3$}
\put(80,20){$C_4$}

\put(50,0){$E_0$} \put(80,50){$E_2$} \put(20,50){$E_1$}
\put(20,3){$R_1$} \put(50,57){$R_3$} \put(92,30){$R_2$} }
\put(45,-5){Figure 7}
\end{picture}
\end{center}


\noindent 2-380, Massachusetts Institute of Technology,  Department of Mathematics\\
\noindent 77 Massachusetts Avenue, Cambridge, MA 02139-4307\\
\noindent {\it Email:} cyxu@math.mit.edu

\end{document}